\newtheorem{theorem}{Theorem}[section]
\newtheorem{lemma}[theorem]{Lemma}
\newtheorem{proposition}[theorem]{Proposition}
\newtheorem{definition}[theorem]{Definition}
\newtheorem{remark}[theorem]{Remark}
\newcommand{\N}{\mathbb{N}}
\newcommand{\id}{\operatorname{id}}
\newcommand{\sym}{\operatorname{Sym}}
\newcommand{\aut}{\operatorname{Aut}}
\newcommand{\ord}[1]{\vert #1\vert}
\newcommand{\dotcup}{\mathop{\mathaccent\cdot\cup}}
\newenvironment{proof}{\par\noindent{\bf Proof.}}{$\qed$\par\bigskip}
\newcommand{\qed}{\enspace\vrule  height6pt  width4pt  depth2pt}
\begin{document}

\title{A family of solutions of the Yang-Baxter equation}

\author{David Bachiller \and Ferran Ced\'o}
\date{}
\maketitle

\begin{abstract}
A new method to construct involutive non-degenerate set-theoretic
solutions $(X^n,r^{(n)})$ of the Yang-Baxter equation from an
initial solution $(X,r)$ is given. Furthermore, the permutation
group $\mathcal{G}(X^n,r^{(n)})$ associated to the solution
$(X^n,r^{(n)})$ is isomorphic to a subgroup of $\mathcal{G}(X,r)$,
and in many cases $\mathcal{G}(X^n,r^{(n)})\cong \mathcal{G}(X,r)$.
\end{abstract}

{\bf Keywords:} Yang-Baxter equation, involutive non-degenerate solutions, bra\-ce, IYB group

{\bf MSC:} 16T25, 20B35, 81R50

\section{Introduction}

The quantum Yang-Baxter equation is one of the basic equations in
ma\-the\-ma\-ti\-cal physics named after the authors of the two
first works in which the equation arose: the solution of the delta
function Fermi gas by C. N. Yang \cite{Yang}, and the solution of
the 8-vertex model by R. J. Baxter \cite{Baxter}. It also lies at
the foundation of the theory of quantum groups. One of the important
open problems related to this equation is compute all its solutions.
Those are linear maps $R: V\otimes V\to V\otimes V$, with $V$ a
vector space, that satisfy
$$R_{12}\circ R_{13}\circ R_{23}=R_{23}\circ R_{13}\circ R_{12},$$
where $R_{ij}$ denotes the  map $R_{ij}: V\otimes V\otimes V\to
V\otimes V\otimes V$ acting as $R$ on the $(i,j)$ tensor factors and
as the identity on the remaining factor.

Finding all the solutions of  the Yang-Baxter equation is a
difficult task far from being solved. Nevertheless, many
solutions have been found during the last 20 years and the related
algebraic structures (Hopf algebras) have been studied.

In \cite{Drinfeld}, Drinfeld suggested the study of  a simpler case:
solutions induced by a linear extension of a mapping $\mathcal{R}:
X\times X\to X\times X$, where $X$ is a basis for $V$. In this case,
one says that $\mathcal{R}$ is a set-theoretic solution of the
quantum Yang-Baxter equation. It is not difficult to see that, if
$\mathcal{\tau}:X^2\to X^2$ is the map defined by
$\mathcal{\tau}(x,y)=(y,x)$, then the map $\mathcal{R}:X^2\to X^2$
is a set-theoretic solution of the quantum Yang-Baxter equation if
and only if the mapping $r=\mathcal{\tau}\circ\mathcal{R}$ is a
solution of the equation
$$
r_{12}\circ r_{23}\circ r_{12}=r_{23}\circ r_{12}\circ r_{23},
$$
where $r_{ij}$ is the map from $X^3$ to $X^3$ that acts as $r$
on the $(i,j)$ components and as the identity on the
remaining component. In  the sequel, we will always work with this
last equivalent equation.

We study solutions with some additional conditions:
involutively and non-degeneracy. A map
$$
\begin{array}{cccc}
r:& X\times X&  \longrightarrow&    X\times X\\
        &   (x,y)&  \longmapsto&    (\sigma_x(y),\gamma_y(x))
\end{array}
$$
is said to be involutive if $r\circ r=\id_{X^2}$. Moreover, it  is
said to be left (resp. right) non-degenerate if each map
$\sigma_x$ (respectively, $\gamma_y$) is bijective, and it is said
to be non-degenerate if it is left and right non-degenerate. If $r$
is involutive and left non-degenerate, it can be checked that
it satisfies $r_{12}\circ r_{23}\circ r_{12}=r_{23}\circ r_{12}\circ
r_{23}$ if and only if it satisfies
$\sigma_x\circ\sigma_{\sigma^{-1}_x(y)}=\sigma_y\circ\sigma_{\sigma^{-1}_y(x)}$
for all $x,y\in X$ (see the proof of
\cite[Theorem~9.3.10]{JO}).

In what follows, by a solution of the YBE we will mean a
non-degenerate involutive set-theoretic solution of the Yang-Baxter
equation.

In the last years, solutions of the YBE have  received a lot of
attention
\cite{Cedo1,Cedo2,Etingof,Gateva,GatevaCameron,GatevaMajid,GatevaVandenBergh,Jespers,Lu,Rump}.
 In this case each solution $(X,r)$ of the YBE has an associated
structure group, denoted by $G(X,r)$, and defined by
$$
G(X,r):=\left\langle x\in X\mid xy=zt\text{ if and only if } r(x,y)=(z,t)\right\rangle.
$$
When $X$ is finite, groups isomorphic to some $G(X,r)$ are called
groups of $I$-type. There is another important group associated to
every solution of the YBE, its permutation group $\mathcal{G}(X,r)$,
which is the subgroup of $\sym_X$ generated by the bijections
$\sigma_x$, for all $x\in X$. It can be proved that
$\mathcal{G}(X,r)$ is a homomorphic image of $G(X,r)$. When $X$ is
finite, groups isomorphic to some $\mathcal{G}(X,r)$ are called IYB
groups.

In \cite{Cedo1}, in order to characterize the groups of
$I$-type, it is suggested to follow two steps:
\bigskip

\noindent {\bf Step 1:}  Determine the finite groups that are IYB
groups.

\noindent {\bf Step 2:} Given an IYB group $G$, find all the
solutions of the YBE $(Y,s)$ with $Y$ finite  such that
$\mathcal{G}(Y,s)\cong G$.
\bigskip

\noindent Nowadays, these two problems remain unsolved. In the
recent Ph.D. thesis of Nir Ben David \cite{NirBenDavid}, it is
claimed that the result corresponding to \cite[Corollary
D]{NirBenDavid} solves Step 2 in homological terms.  In fact, this
result reduces Step 2 to the following problem.
\bigskip

\noindent {\bf Problem.} {\em Let $G$ be an IYB group. Let
$\pi\colon G\longrightarrow A$ be a bijective $1$-cocycle over a
$G$-module $A$. Let $n$ be a positive integer. Find all the
extensions of $G$-modules
$$0\longrightarrow \mathbb{Z}^n\longrightarrow E\longrightarrow A\longrightarrow 0,$$
where $\mathbb{Z}^n$ is a trivial $G$-module, $E\cong \mathbb{Z}^n$
as abelian groups and there is a basis $Y$ of $E$, as free abelian
group, which is invariant by the action of $G$ on $E$.}
\bigskip

The aim of this work is  to present some new results related to the
resolution of Step 2. To this end, an algebraic structure called
brace and introduced by Rump in \cite{Rump} is very useful. Rump
showed that this structure has deep connections with solutions of
the YBE. We use some of this connections to prove our results.

This paper is organized as follows. In Section \ref{preliminars}, we
recall some results about braces. We only sketch there, without
proofs, the theorems that we need later; for further reading, an
introduction to braces and their connection to the Yang-Baxter
equation can be found in \cite{Cedo2}. Next, we devote Section
\ref{seccioconstruccio} to present our main result about solutions
of the YBE with fixed permutation group. We do not solve the general
problem, but we generalize a result of \cite{Cedo1} which gives a
non-obvious construction of an infinite family of solutions with a
fixed permutation group. Specifically, given an initial solution
$(X,r)$ of the YBE, we state and prove a procedure to define, for
each $n\in\N$, a solution $(X^n,r^{(n)})$  such that
$\mathcal{G}(X^n, r^{(n)})$ is isomorphic to a subgroup of
$\mathcal{G}(X,r)$, and then we provide sufficient conditions for
 $\mathcal{G}(X^n,r^{(n)})$ to be isomorphic to $\mathcal{G}(X,r)$. One of
the key steps of the proof of this result  is the use of the
properties of the brace structure.

\section{Preliminars about braces}\label{preliminars}

We only present here very briefly the results about braces that we
will need later.  For a much more detailed account, see
\cite{Cedo2}. We begin recalling the definition of left brace.

\begin{definition}
{\rm A \emph{left brace}  is a set $G$ with two operations $+$ and
$\cdot$ such that $(G,+)$ is an abelian group, $(G,\cdot)$ is a
group, and every $a,b,c\in G$ satisfy
$$
a(b+c)+a=ab+ac.
$$
We will refer to this property as the brace property.  We call
$(G,+)$ the additive group, and $(G,\cdot)$ the multiplicative group
of the left brace. Right braces are defined similarly, changing the
brace property by $(b+c)a+a=ba+ca$.}
\end{definition}

For any $a\in G$, we define a map $\lambda_a: G\to G$ by $\lambda_a(b)=ab-a$.
In the study of braces, these maps play an important role; here is a list of some of their properties.
\begin{lemma}\label{proplambda}
Let $G$ be a left brace. The following properties hold:
\begin{itemize}
\item[(i)] $\lambda_a$ is bijective and $\lambda^{-1}_a=\lambda_{a^{-1}}$.
\item[(ii)] $\lambda_a(x+y)=\lambda_a(x)+\lambda_a(y)$; that is, $\lambda_a$ is an automorphism of the abelian group $(G,+)$.
\item[(iii)] $\lambda_a\lambda_b=\lambda_{ab}$; that is, the map $\lambda:(G,\cdot)\to \aut(G,+)$, defined by $\lambda(a)=\lambda_a$ is a homomorphism of groups.
\item[(iv)] $a+b=a\cdot\lambda^{-1}_a(b).$
\item[(v)] $a\cdot\lambda^{-1}_a(b)=b\cdot\lambda^{-1}_b(a).$
\item[(vi)] $\lambda_a\lambda_{\lambda^{-1}_a(b)}=\lambda_b\lambda_{\lambda^{-1}_b(a)}$.
\item[(vii)] The map $r:G\times G\to G\times G$ defined by  $r(x,y)=\left(\lambda_x(y),\lambda^{-1}_{\lambda_x(y)}(x)\right)$
is a solution of the YBE. It is called \emph{solution associated to
the left brace $G$}.
\end{itemize}
\end{lemma}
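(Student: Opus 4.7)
The plan is to prove the items in an order that lets each one feed into the next, rather than following the statement top to bottom. The key preliminary is that the additive identity $0$ and the multiplicative identity $e$ must coincide: setting $b=c=0$ in the brace property gives $a\cdot 0 + a = a\cdot 0 + a\cdot 0$, hence $a\cdot 0 = a$, and taking $a=e$ yields $e=0$. Consequently $\lambda_e(b) = eb - e = b$, so $\lambda_e = \id$. A second small preliminary is $ab + a(-b) = 2a$, obtained by setting $c = -b$ in the brace property; specializing $b = a^{-1}$ gives $a(-a^{-1}) = 2a$, which will be needed for (iv).

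With these in hand I would prove (ii) first, since it unlocks most of the rest: rewriting the brace property as $a(x+y) = ax + ay - a$ and subtracting $a$ yields $\lambda_a(x+y) = \lambda_a(x) + \lambda_a(y)$ immediately. From (ii) a short chain gives (iii):
$$\lambda_a(\lambda_b(c)) = \lambda_a(bc) - \lambda_a(b) = (abc - a) - (ab - a) = abc - ab = \lambda_{ab}(c).$$
Item (i) is then instant, since $\lambda_a \lambda_{a^{-1}} = \lambda_e = \id$.

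For (iv), the key computation is $a\cdot \lambda_a^{-1}(b) = a(a^{-1}b - a^{-1})$; expanding with $a(x+y) = ax + ay - a$ and using $a\cdot a^{-1}b = b$ together with $a(-a^{-1}) = 2a$ collapses the expression to $a + b$. Then (v) is immediate from the commutativity of $(G,+)$. For (vi), apply the homomorphism $\lambda\colon (G,\cdot)\to\aut(G,+)$ of (iii) to both sides of (v): the left-hand side becomes $\lambda_a \lambda_{\lambda_a^{-1}(b)}$ and the right-hand side becomes $\lambda_b \lambda_{\lambda_b^{-1}(a)}$.

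The payoff is (vii). Involutivity is a direct two-step computation using only (i): if $y' = \lambda_x(y)$ and $x' = \lambda^{-1}_{y'}(x)$, then
$$r(y',x') = \left(\lambda_{y'}(x'),\, \lambda^{-1}_{\lambda_{y'}(x')}(y')\right) = \left(x,\, \lambda^{-1}_x(y')\right) = (x,y).$$
Left non-degeneracy is automatic because $\sigma_x = \lambda_x$ is bijective by (i), and by the criterion recalled in the introduction, for a left non-degenerate involutive map the YBE reduces to $\sigma_x \sigma_{\sigma_x^{-1}(y)} = \sigma_y \sigma_{\sigma_y^{-1}(x)}$, which is exactly (vi). The step I expect to be the main obstacle is (iv), because one has to carefully shuttle between $+$ and $\cdot$ via the brace property while tracking additive inverses; every subsequent item either uses (iv) directly or imitates its structure.
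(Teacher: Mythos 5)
Your preliminaries ($a\cdot 0=a$, hence $0=e$ and $\lambda_e=\id$; $ab+a(-b)=2a$, hence $a(-a^{-1})=2a$) and your items (i)--(vi) are all correct, and the ordering (ii) $\to$ (iii) $\to$ (i) $\to$ (iv) $\to$ (v) $\to$ (vi) is logically sound; note that the paper gives no argument of its own here (it simply cites \cite[Lemmas 2.9 and 4.1]{Cedo2}), so yours is a genuine self-contained reconstruction. There is, however, one real gap, in (vii): the paper's standing convention is that a ``solution of the YBE'' is an involutive \emph{non-degenerate} solution, meaning both left and right non-degenerate, and you never verify that the second-coordinate maps $\gamma_y(x)=\lambda^{-1}_{\lambda_x(y)}(x)$ are bijective. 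You establish involutivity, left non-degeneracy, and (via the criterion from the introduction together with (vi)) the braid relation, but right non-degeneracy is not a formal consequence of these three facts; it is precisely the part of the verification where the brace structure must be used again, so it cannot be waved through as ``automatic''.

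The gap is fillable with the tools you already have. By (iv), $xy=x+\lambda_x(y)=\lambda_x(y)+x=\lambda_x(y)\cdot\lambda^{-1}_{\lambda_x(y)}(x)$, so $\gamma_y(x)=\lambda_x(y)^{-1}xy$. Suppose $\gamma_y(x)=z$ and set $u=\lambda_x(y)$; then $x=\lambda_u(z)=uz-u$ and $xy=x+u=uz$, so $x=uzy^{-1}$, and comparing the two expressions for $x$ gives $u=uz-uzy^{-1}=-\lambda_{uz}(y^{-1})=-\lambda_u\bigl(\lambda_z(y^{-1})\bigr)=u-u\cdot\lambda_z(y^{-1})$, whence $u\cdot\lambda_z(y^{-1})=0=e$ and $u=\bigl(\lambda_z(y^{-1})\bigr)^{-1}$ is determined by $y$ and $z$ alone. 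Therefore $x=\lambda_{(\lambda_z(y^{-1}))^{-1}}(z)$ is the unique candidate preimage of $z$ under $\gamma_y$, and reversing the computation shows it is indeed a preimage; this proves $\gamma_y$ is bijective. Adding this (or any equivalent verification of right non-degeneracy) completes your proof of (vii); everything else stands as written.
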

\begin{proof}
See \cite[Lemmas 2.9 and 4.1]{Cedo2}.
\end{proof}

So any left brace gives us a solution of the YBE.
There are other relations between braces and solutions of the YBE,
like the two next results, which are a characterization of groups of
I-type and IYB groups through braces.

\begin{proposition}
A group is isomorphic to $\mathcal{G}(X,r)$ for some  solution of
the YBE $(X,r)$ if and only if it is the multiplicative group of a
left brace.

In particular, a finite group $G$ is an IYB group if and only if it is the multiplicative group of a finite left brace.
\end{proposition}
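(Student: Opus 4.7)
The two implications call for very different arguments, and I would tackle the easy direction first.

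($\Leftarrow$) If $G$ is the multiplicative group of a left brace, Lemma~\ref{proplambda}(vii) already hands us a candidate, namely the associated solution $(G,r_G)$ with $\sigma_x=\lambda_x$. By Lemma~\ref{proplambda}(iii), the map $\lambda\colon G\to\mathcal{G}(G,r_G)$, $g\mapsto\lambda_g$, is a surjective group homomorphism, so $\mathcal{G}(G,r_G)\cong G/\ker\lambda$. When the socle $\ker\lambda$ is trivial we are done directly. In the general case, I would refine the construction by enlarging $X$: take a $G$-set on which $G$ acts faithfully (for instance, a disjoint union of enough translates of $G$, or, equivalently, embed the given brace into one with trivial socle), and build $r$ out of the $\lambda$-action via the same formula as in Lemma~\ref{proplambda}(vii). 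Involutivity, non-degeneracy, and the YBE all reduce to brace identities already recorded in Lemma~\ref{proplambda}.

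($\Rightarrow$) Given a solution $(X,r)$, I would work through the structure group $G(X,r)$. The canonical map $X\to G(X,r)$ extends to a bijective $1$-cocycle $\pi\colon G(X,r)\to\freeabelian{X}$ onto the free abelian group on $X$; this is the crucial fact, proved from the defining relations $xy=zt\Leftrightarrow r(x,y)=(z,t)$. Rump's theorem then transports the additive structure of $\freeabelian{X}$ back along $\pi$, turning $G(X,r)$ into a left brace whose multiplicative group is $G(X,r)$. Next, the natural epimorphism $G(X,r)\twoheadrightarrow\mathcal{G}(X,r)$ sending the class of $x$ to $\sigma_x$ is a group homomorphism, and I would verify that its kernel is an ideal of the brace by a direct computation from $a(b+c)+a=ab+ac$. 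Consequently $\mathcal{G}(X,r)$ inherits a left brace structure whose multiplicative group is $\mathcal{G}(X,r)$ itself, as required.

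Both constructions preserve finiteness, so the ``in particular'' clause characterising IYB groups follows with no extra work. The main obstacle is sufficiency when $\ker\lambda\ne\{1\}$: Lemma~\ref{proplambda}(vii) alone only recovers the quotient $G/\ker\lambda$, and some additional structural input (a faithful $G$-set, or a trivial-socle embedding) is needed to realise $G$ on the nose. Everything else is essentially formal manipulation of the brace identity.
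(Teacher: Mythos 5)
First, a point of reference: the paper does not actually prove this proposition in the text; its ``proof'' is a citation to \cite[Corollary 4.6]{Cedo2}. So there is no in-paper argument to match, and your sketch has to be judged on its own. The direction ($\Rightarrow$) is sound in outline and is the standard argument: the bijective $1$-cocycle $G(X,r)\to\freeabelian{X}$ turns $G(X,r)$ into a left brace in which $X$ is a $\lambda$-invariant free additive basis (this is exactly Theorem~\ref{thm4.4}); the kernel of $G(X,r)\twoheadrightarrow\mathcal{G}(X,r)$ is $\{a:\lambda_a|_X=\id\}$, which equals $\ker\lambda$ because $X$ is an additive basis and $\lambda_a$ is additive, and $\ker\lambda$ is an ideal, so the quotient is a left brace with multiplicative group $\mathcal{G}(X,r)$. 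You should make the step ``$\lambda_a$ fixes $X$ pointwise $\Rightarrow$ $\lambda_a=\id$'' explicit, since that is where the basis property is used.

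The genuine gap is in ($\Leftarrow$), and you have named it without closing it. Lemma~\ref{proplambda}(vii) only yields $\mathcal{G}(G,r_G)\cong G/\ker\lambda$, and your two proposed repairs are not interchangeable; one of them points the wrong way. If you embed the given brace $B$ into a brace $H$ with $\ker\lambda^H=\{1\}$, the associated solution on $H$ has permutation group isomorphic to $H$, not to $G$, and you are back to needing a $\lambda$-invariant subset $Y\subseteq H$ with $\langle\lambda_y|_Y: y\in Y\rangle\cong G$ --- which is the original problem. The ``faithful $G$-set'' idea does work, but it is the substantive content of this implication and must be written down and verified, not waved at: for instance, take $X=G\times G$ with $\sigma_{(g,s)}(h,t)=(\lambda_g(h),gt)$; the braiding condition $\sigma_x\sigma_{\sigma_x^{-1}(y)}=\sigma_y\sigma_{\sigma_y^{-1}(x)}$ splits into Lemma~\ref{proplambda}(vi) in the first coordinate and Lemma~\ref{proplambda}(v) (i.e.\ $g\lambda_g^{-1}(h)=h\lambda_h^{-1}(g)$) in the second, faithfulness of left multiplication gives $\mathcal{G}(X,r)\cong G$, and involutivity together with right non-degeneracy still has to be checked (most cleanly by realizing this as the restriction of the solution associated to a semidirect product of $B$ with a trivial brace carrying the regular action of $(B,\cdot)$). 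As submitted, the hard half of the equivalence is asserted rather than proved.
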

\begin{proof}
See \cite[Corollary 4.6]{Cedo2}.
\end{proof}

\begin{proposition}\label{prop5.2}
A group $G$ is of $I$-type if and only if it is isomorphic to the
multiplicative group of a left brace $B$ such that the additive
group of $B$ is a free abelian group with a finite basis $X$ such
that $\lambda_x(y)\in X$, for all $x,y\in X$.
\end{proposition}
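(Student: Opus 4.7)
The proposition is a biconditional, handled in two directions. The bridge between both sides is the notion of a \emph{bijective $1$-cocycle}, which in the brace picture is encoded by the interplay of $+$ and $\cdot$ through Lemma \ref{proplambda}(iv).

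$(\Rightarrow)$ Suppose $G \cong G(X,r)$ for a finite solution $(X,r)$. The plan is to place a left brace structure on $G(X,r)$ whose additive group is the free abelian group on $X$ and for which $\lambda_x$ acts on $X$ exactly as $\sigma_x$ does. The standard device is the bijective $1$-cocycle $\pi\colon G(X,r)\to \freeabelian{X}$ that sends each generator $x\in X$ to its basis vector and satisfies $\pi(gh)=\pi(g)+g\cdot \pi(h)$, where the $G(X,r)$-action on $\freeabelian{X}$ permutes the basis via the $\sigma_x$'s. Transporting the addition of $\freeabelian{X}$ back to $G(X,r)$ along $\pi$ yields the required additive structure. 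A direct verification shows that the brace identity is equivalent to the $1$-cocycle identity, and that the resulting $\lambda_x$ agrees with $\sigma_x$ on the basis $X$; by non-degeneracy, $\lambda_x$ then restricts to a permutation of $X$.

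$(\Leftarrow)$ Suppose $G \cong (B,\cdot)$ for a brace $B$ with the stated properties. Define
$$r(x,y) = \bigl(\lambda_x(y),\,\lambda_{\lambda_x(y)}^{-1}(x)\bigr),\qquad x,y\in X,$$
i.e., the restriction to $X\times X$ of the brace solution from Lemma \ref{proplambda}(vii). The first coordinate lies in $X$ by hypothesis. For the second, note that $\lambda_x\in\aut(B,+)$ sends the basis $X$ into itself, hence restricts to a bijection of $X$, so $\lambda_x^{-1}$ also preserves $X$; thus $r$ is a well-defined finite non-degenerate involutive set-theoretic YBE solution on $X$. To see $G\cong G(X,r)$, I would define $\phi\colon G(X,r)\to (B,\cdot)$ on generators by $x\mapsto x$. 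Using Lemma \ref{proplambda}(iv) together with the commutativity of $+$, the defining relation $xy = \lambda_x(y)\cdot \lambda_{\lambda_x(y)}^{-1}(x)$ of $G(X,r)$ translates in $B$ into the identity $x+\lambda_x(y) = \lambda_x(y)+x$, so $\phi$ is a well-defined group homomorphism.

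The main technical obstacle is showing that $\phi$ is an isomorphism. The cleanest route is to observe that the identity map $(B,\cdot)\to(B,+)$ is itself a bijective $1$-cocycle, since the cocycle condition $ab = a+\lambda_a(b)$ is precisely Lemma \ref{proplambda}(iv). Meanwhile $G(X,r)$ bears its own canonical bijective $1$-cocycle onto $\freeabelian{X}$ sending generators to basis vectors, and $\phi$ intertwines these two cocycles on $X$; hence $\phi$ must be bijective, and $G \cong G(X,r)$ as required. The subtle point throughout is keeping the cocycle/brace translations aligned so that neither the brace property nor the YBE has to be re-derived by hand.
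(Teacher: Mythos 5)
Your argument is correct and follows essentially the same route as the source the paper cites for this statement (the paper itself gives no proof beyond a reference to \cite[Proposition 5.2]{Cedo2}): the forward direction via the canonical bijective $1$-cocycle $G(X,r)\to\freeabelian{X}$ coming from the $I$-type structure theorem, and the converse by restricting the brace solution of Lemma \ref{proplambda}(vii) to $X$ and comparing the two bijective $1$-cocycles to get bijectivity of $\phi$. The only point worth spelling out is that right non-degeneracy of the restricted solution follows because each $\gamma_y$ of the ambient brace solution is injective and maps the finite set $X$ into itself; otherwise the proposal is complete.
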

\begin{proof}
See \cite[Proposition 5.2]{Cedo2}.
\end{proof}

The next theorem is an essential tool for the proof of the main
result of this paper. It allows us to embed any solution of the YBE
$(X,s)$ inside a left brace, and then we can use all the
additional algebraic properties of this structure.

\begin{theorem}\label{thm4.4}
Let $(X,s)$ be a solution of the YBE. Then $G(X,s)$ is isomorphic to
the multiplicative group of a left brace $H$ such that, if $(H,r)$
is the solution associated to it, then there exists a subset $Y$ of
$H$ such that $(Y,r')$, where $r'$ is the restriction of $r$ to
$Y^2$, is a solution of the YBE isomorphic to $(X,r)$. Furthermore,
$\lambda_h(y)\in Y$, for all $y\in Y$ and all $h\in H$.
\end{theorem}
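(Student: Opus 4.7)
The plan is to realise $G(X,s)$ as the multiplicative group of a brace built from a bijective $1$-cocycle, and then take $Y$ to be the image of $X$ in $G(X,s)$. First I would construct the brace. Let $H_0=\Z^{(X)}$ with canonical basis $\{e_x\}_{x\in X}$ and define an action of $G(X,s)$ on $H_0$ by letting a generator $x\in X$ act as the permutation $e_y\mapsto e_{\sigma_x(y)}$, extended $\Z$-linearly. One must check that this respects the defining relations $xy=\sigma_x(y)\gamma_y(x)$ of $G(X,s)$; the verification reduces to the identity $\sigma_x\sigma_{\sigma^{-1}_x(y)}=\sigma_y\sigma_{\sigma^{-1}_y(x)}$ recorded in the introduction, together with the involutivity relation $\sigma_{\sigma_x(y)}(\gamma_y(x))=x$, both of which hold because $(X,s)$ is involutive and non-degenerate.

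Next, the formula $\pi(x)=e_x$ extends by the cocycle rule $\pi(ab)=\pi(a)+a\cdot\pi(b)$ to a well-defined map $\pi\colon G(X,s)\to H_0$. To verify that $\pi$ is bijective I would either produce an explicit inverse using the normal form on $G(X,s)$ afforded by the same YBE identity, or invoke the classical Etingof--Schedler--Soloviev result (reformulated in the brace language in \cite{Cedo2}) asserting precisely this. Transporting the addition of $H_0$ to $G(X,s)$ through $\pi$ equips the underlying set of $G(X,s)$ with a second operation $+$; the brace axiom $a(b+c)+a=ab+ac$ is then equivalent to the cocycle identity for $\pi$, so this produces a left brace $H$ whose multiplicative group is $G(X,s)$.

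Now take $Y$ to be the image of $X$ in $G(X,s)$, so that $\pi(Y)=\{e_x:x\in X\}$. Under $\pi$ the automorphism $\lambda_y$ of $(H,+)$ corresponds to the action of $y$ on $H_0$, so for $y,y'\in Y$ one has $\lambda_y(y')\in Y$ and, identifying $Y$ with $X$, the restriction $\lambda_y|_Y$ coincides with $\sigma_y$. Hence the first component of $r(y,y')=(\lambda_y(y'),\lambda^{-1}_{\lambda_y(y')}(y))$ restricted to $Y^2$ is $\sigma_y(y')$. For the second component, Lemma~\ref{proplambda}(v) gives $a\cdot\lambda^{-1}_a(b)=b\cdot\lambda^{-1}_b(a)$; applying it with $a=\sigma_x(y)$ and $b=x$ and using the defining relation $xy=\sigma_x(y)\gamma_y(x)$ in $G(X,s)$ yields $\lambda^{-1}_{\lambda_x(y)}(x)=\gamma_y(x)$. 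Thus $(Y,r')\cong(X,s)$.

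Finally, Lemma~\ref{proplambda}(iii) tells us that $\{h\in H:\lambda_h(Y)=Y\}$ is a subgroup of $(H,\cdot)$. It contains $Y$ by the previous paragraph (each $\lambda_y|_Y=\sigma_y$ is a bijection of $Y$), and $Y$ generates $G(X,s)=(H,\cdot)$, so this subgroup equals all of $H$. The principal obstacle is the bijectivity of the $1$-cocycle $\pi$: this is the classical nontrivial link between involutive non-degenerate YBE solutions and braces, and is where non-degeneracy and involutivity are used in a substantive way; everything else is a matter of translating between the $(\sigma,\gamma)$ and $(\lambda,\lambda^{-1})$ formalisms.
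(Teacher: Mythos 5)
Your argument is correct and is essentially the proof the paper relies on: the paper's own ``proof'' is merely a citation to \cite[Theorem~4.4]{Cedo2}, whose content is exactly your construction --- realise $G(X,s)$ as a group of $I$-type via the bijective $1$-cocycle $\pi\colon G(X,s)\to \Z^{(X)}$, transport the addition to get the brace $H$, and take $Y$ to be the image of $X$, with $\lambda_x|_Y=\sigma_x$ and $\lambda^{-1}_{\lambda_x(y)}(x)=\gamma_y(x)$ recovered from the defining relations. The one substantive step you defer, the bijectivity of $\pi$, is precisely what the cited source also attributes to the classical Etingof--Schedler--Soloviev and Gateva-Ivanova--Van den Bergh results, so your outline matches the intended proof.
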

\begin{proof}
See \cite[Theorem 4.4]{Cedo2} and its proof.
\end{proof}

\section{Solutions with fixed permutation group}\label{seccioconstruccio}
In this section, we focus on the problem of the construction of
solutions of the YBE with a fixed permutation group.

There is a simple way to produce solutions with the same
permutation group. Note that, for any set $X$, the map
$r(x,y)=(y,x)$, for all $x,y\in X$, is a solution of the YBE
with trivial permutation group; we call $(X,r)$ the
trivial solution on $X$. Note also that, if we have solutions
$(X_i,r_i)$ of the YBE with permutation group
$G_i=\mathcal{G}(X_i,r_i)$, for all $i=1,\dots,n$, and
$X=\displaystyle{\dotcup}_{i=1}^n X_i$ is the disjoint union of the
sets $X_i$, then $(X,r)$ is a solution of the YBE, where
$$
r(x,y)=\left\lbrace
\begin{array}{ll}
r_i(x,y)&   \text{if there exists an } i \text{ such that } x,y\in X_i,\\
(y,x)&          \text{otherwise}
\end{array}
\right.
$$
with permutation group isomorphic to $G_1\times\cdots\times
G_n$. Hence, since $\{\id\}\times G\cong G$, we can associate
 to each solution $(X,r)$ of the YBE infinitely many
solutions of the YBE with the same permutation group. However, this
construction only increases the size of our set adding points that
behave as the trivial solution, so we want to find ``less trivial''
constructions.

We will generalize \cite[Lemma~5.2]{Cedo1}, which gives a
non-obvious construction of an infinite family of solutions of the
YBE associated to a fixed permutation group. This result is
stated in \cite{Cedo1} in terms of cycle sets. Translated to the
language of solutions of the YBE equation, the result could be
stated as follows.

\begin{proposition}\label{5.2Cedo1}
Let $(X,r)$ be a solution of the YBE, with $r(x,y)=(
\sigma_x(y),\gamma_{y}(x))$. Then, for $x_1,x_2\in X$, the map
$f_{(x_1,x_2)}\colon X^2\longrightarrow X^2$ defined by
$$
f_{(x_1,x_2)}(y_1,y_2)=(
\sigma_{x_1}\sigma_{x_2}(y_1),\sigma^{-1}_{\sigma_{x_1}\sigma_{x_2}(y_1)}\sigma_{x_1}\sigma_{x_2}\sigma_{y_1}(y_2)),
$$
for $y_1,y_2\in X$, is bijective and $(X^2,r^{(2)})$, where
$r^{(2)}:X^2\times X^2\longrightarrow X^2\times X^2$ is the map
defined by
$$
r^{(2)}((x_1,x_2),(y_1,y_2))=( f_{(x_1,x_2)}(y_1,y_2),
f^{-1}_{f_{(x_1,x_2)}(y_1,y_2)}(x_1,x_2)),
$$
is a solution of the YBE. Moreover, if $\sigma_z=\id$ for some $z\in
X$, then $\mathcal{G}(X^2,r^{(2)})\cong \mathcal{G}(X,r)$.
\end{proposition}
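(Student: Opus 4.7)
The plan is to exploit Theorem~\ref{thm4.4} in order to embed $(X,r)$ inside a left brace $H$ with $X\subseteq H$ and $\sigma_x=\lambda_x|_X$ for every $x\in X$, and then to recognise $f_{(x_1,x_2)}$ as the ``pair form'' of the action of $\lambda_{x_1 x_2}$ on products $y_1 y_2\in H$. Since $\sigma_{x_1}\sigma_{x_2}=\lambda_{x_1}\lambda_{x_2}=\lambda_{x_1 x_2}$ by Lemma~\ref{proplambda}(iii), the first coordinate of $f_{(x_1,x_2)}(y_1,y_2)$ is precisely $\lambda_{x_1 x_2}(y_1)$, while a short computation using the identity $\lambda_g(xy)=\lambda_g(x)\cdot\lambda^{-1}_{\lambda_g(x)}(\lambda_g\lambda_x(y))$ (a direct consequence of the brace axiom together with Lemma~\ref{proplambda}(iii)--(iv)) will show that the \emph{product} of the two coordinates of $f_{(x_1,x_2)}(y_1,y_2)$ equals $\lambda_{x_1 x_2}(y_1 y_2)$ in $H$. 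Both coordinates lie in $X$ because $\lambda_h$ preserves $X$ by Theorem~\ref{thm4.4}.

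Given this dictionary, bijectivity of $f_{(x_1,x_2)}$ is immediate: the preimage of $(y_1,y_2)$ is the pair $(a,b)\in X\times X$ with $a=\lambda^{-1}_{x_1 x_2}(y_1)$ and $b$ determined by $ab=\lambda^{-1}_{x_1 x_2}(y_1 y_2)$, the inclusion $b\in X$ being another application of the same brace identity.

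To verify the YBE axiom for $r^{(2)}$ it will suffice to check the cycle equation $f_{(x_1,x_2)}\,f_{(a,b)}=f_{(y_1,y_2)}\,f_{(c,d)}$, where $(a,b)=f^{-1}_{(x_1,x_2)}(y_1,y_2)$ and $(c,d)=f^{-1}_{(y_1,y_2)}(x_1,x_2)$. Iterating the dictionary above, the composite $f_u\,f_v$ sends $(z_1,z_2)$ to the pair whose first coordinate is $\lambda_{u_1 u_2 v_1 v_2}(z_1)$ and whose coordinate-product in $H$ equals $\lambda_{u_1 u_2 v_1 v_2}(z_1 z_2)$. Hence the two sides of the cycle equation coincide as soon as $(x_1 x_2)(ab)=(y_1 y_2)(cd)$ in $H$; substituting $ab=\lambda^{-1}_{x_1 x_2}(y_1 y_2)$ and $cd=\lambda^{-1}_{y_1 y_2}(x_1 x_2)$ this is exactly Lemma~\ref{proplambda}(v) for the elements $x_1 x_2$ and $y_1 y_2$ of $H$.

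For the permutation group I will use the projection $\phi\colon \mathcal{G}(X^2,r^{(2)})\to\mathcal{G}(X,r)$ onto the first coordinate. Because the first coordinate of $f_{(x_1,x_2)}(y_1,y_2)$ depends only on $y_1$ and equals $\sigma_{x_1}\sigma_{x_2}(y_1)$, the map $\phi$ is a well-defined group homomorphism with $\phi(f_{(x_1,x_2)})=\sigma_{x_1}\sigma_{x_2}$. Its injectivity is the one delicate point: if $g$ is a composition of generators corresponding to a brace element $P$ and $\phi(g)=\id$, then $\lambda_P|_X=\id$, and invoking once more the formula for $\lambda_P(xy)$ together with $\lambda_x(y)\in X$ (Theorem~\ref{thm4.4}) forces $\lambda_P(xy)=xy$ for all $x,y\in X$, which via the dictionary means $g=\id_{X^2}$. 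Surjectivity under the hypothesis $\sigma_z=\id$ is then immediate because $\sigma_x=\sigma_x\sigma_z=\phi(f_{(x,z)})$ for every $x\in X$. The main conceptual step throughout is the translation from the pair notation $(x_1,x_2)$ to the product $x_1 x_2\in H$; once that dictionary is in place, the YBE collapses to Lemma~\ref{proplambda}(v) and the permutation-group statement to the elementary identity just displayed.
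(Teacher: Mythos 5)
Your brace dictionary (first coordinate $=\lambda_{x_1x_2}(y_1)$, coordinate product $=\lambda_{x_1x_2}(y_1y_2)$) is exactly the identity the paper establishes as equation~(\ref{3.1}) in the proof of the general case, Theorem~\ref{teoremaXn}, and your reduction of the braid relation to Lemma~\ref{proplambda}(v) is equivalent to the paper's use of Lemma~\ref{proplambda}(vi). The injectivity of your first-coordinate projection $\phi$ is the paper's Lemma~\ref{psi} read in the opposite direction (and the paper's version is even easier: two elements of the image of $\psi$ with the same first component already come from the same $\tau$). So in spirit you are following the paper's route.

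There is, however, a genuine gap: you never verify that $r^{(2)}$ is \emph{right} non-degenerate, i.e.\ that for each fixed $\overline{y}$ the map $\gamma_{\overline{y}}(\overline{x})=f^{-1}_{f_{\overline{x}}(\overline{y})}(\overline{x})$ is a bijection of $X^2$. Recall that in this paper ``solution of the YBE'' means involutive \emph{and non-degenerate}; the cycle equation you check is equivalent to the braid relation only under the standing hypotheses of involutivity and left non-degeneracy, and says nothing about the bijectivity of the second component. This is condition (b) in the proof of Theorem~\ref{teoremaXn} and it is roughly half of that proof. Injectivity of $\gamma_{\overline{y}}$ follows from your dictionary together with the non-degeneracy of the brace solution (Lemma~\ref{proplambda}(vii)), but surjectivity is not automatic: given a target $(z_1,z_2)\in X^2$, the brace solution only produces an element $a\in H$ with $\lambda^{-1}_{\lambda_a(y_1y_2)}(a)=z_1z_2$, and one must then factor $a=x_1x_2$ with $x_1,x_2$ lying in $X$ (not merely in $H$) and matching the first coordinate; the paper does this by an inductive construction using Lemma~\ref{proplambda}(ii)--(iv) and the invariance of $X$ under all $\lambda_h$ from Theorem~\ref{thm4.4}. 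Note that $X$ is not assumed finite, so you cannot deduce surjectivity from injectivity by counting. Your argument is incomplete until this step is supplied.
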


\begin{remark}
{\rm The assumption that $\sigma_x=\id$, for some $x\in X$, is not
true in general.  However, increasing the size of our set, we can
always construct a solution with this property preserving the same
 permutation group. Namely, consider the solution $(X\dotcup
\{z\},r')$ for the disjoint union of $X$ with a set of one point,
defined as $r'(x,y)=r(x,y)$ if $x$ and $y$ belong to $X$ and
$r'(x,y)=(y,x)$ if either $x=z$ or $y=z$ (note that it is the
same trivial construction given at the beginning of this section).}
\end{remark}

Thus, beginning with an initial solution $(X,r)$ of the YBE and
applying Proposition~\ref{5.2Cedo1} to $X$, $X^2$, $(X^2)^2$\dots~we
can construct infinitely many solutions of the YBE with the same
associated permutation group $\mathcal{G}(X,r)$.

 In order to generalize this result we will need the following
lemma, which is a generalization of \cite[Lemma~5.1]{Cedo1}. We use
the notation $\overline{x}=(x_1,\dots,x_n)$ for the elements of
$X^n$.

\begin{lemma}\label{psi}
Let $X$ be a set and let $\sigma\colon X\longrightarrow \sym_X$ be a
map. Consider the map $\psi:  \sym_X \longrightarrow    \sym_{X^n}$
defined by
$\psi(\tau)(\overline{y})=\left(t_1^\tau(\overline{y}),\dots,t_n^{\tau}(\overline{y})\right)$,
for all $\tau\in\sym_X$ and all $\overline{y}\in X^n$, where
$
t_1^{\tau}(\overline{y})=\tau(y_1),
$
$$
t_{j+1}^\tau(\overline{y})=\sigma(t_j^\tau(\overline{y}))^{-1}\cdots\sigma(t_1^\tau(\overline{y}))^{-1}
\tau\sigma(y_1)\cdots\sigma(y_{j})(y_{j+1})\text{, for }j>0.
$$
Then $\psi$ is a monomorphism.
\end{lemma}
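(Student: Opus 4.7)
The plan is to verify that $\psi$ is a group homomorphism into $\sym_{X^n}$ and then read off injectivity from the first coordinate. I would first treat $\psi$ as a map from $\sym_X$ into the monoid of all maps $X^n\to X^n$; bijectivity of each $\psi(\tau)$ will fall out of the functoriality.

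First I would check $\psi(\id)=\id_{X^n}$ by induction on $j$. The base case is trivial, and assuming $t_i^{\id}(\overline{y})=y_i$ for $i\le j$, every $\sigma(t_i^{\id}(\overline{y}))^{-1}$ in the prefix becomes $\sigma(y_i)^{-1}$, which telescopes against $\sigma(y_1)\cdots\sigma(y_j)$ on the right to leave $y_{j+1}$.

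The main step is the functorial identity $\psi(\tau\tau')=\psi(\tau)\circ\psi(\tau')$. Setting $\overline{z}=\psi(\tau')(\overline{y})$, so $z_k=t_k^{\tau'}(\overline{y})$, I would prove by induction on $j$ that $t_j^{\tau\tau'}(\overline{y})=t_j^{\tau}(\overline{z})$. The case $j=1$ is just $\tau\tau'(y_1)=\tau(z_1)$. For the inductive step, after substituting the hypothesis into the prefix $\sigma(t_j^{\tau\tau'})^{-1}\cdots\sigma(t_1^{\tau\tau'})^{-1}$, the equality reduces to
\[
\tau'\sigma(y_1)\cdots\sigma(y_j)(y_{j+1})=\sigma(z_1)\cdots\sigma(z_j)(z_{j+1}).
\]
But this comes for free from the defining formula for $z_{j+1}$: applying $\sigma(z_1)\cdots\sigma(z_j)$ to $z_{j+1}=\sigma(z_j)^{-1}\cdots\sigma(z_1)^{-1}\tau'\sigma(y_1)\cdots\sigma(y_j)(y_{j+1})$ telescopes the negative prefix and yields exactly the right-hand side.

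Combining the two facts, $\psi(\tau^{-1})\circ\psi(\tau)=\psi(\id)=\id_{X^n}$, so each $\psi(\tau)$ is a bijection and $\psi:\sym_X\to\sym_{X^n}$ is a well-defined group homomorphism. Injectivity is immediate because the very first coordinate is $t_1^\tau(\overline{y})=\tau(y_1)$, which already determines $\tau$ completely. The main obstacle is the telescoping bookkeeping in the inductive step for functoriality, since the prefix of $\sigma$-factors appears in the reverse order to the suffix and the indices must be tracked with care; but once the right hypothesis is in place, the identity is forced by direct cancellation.
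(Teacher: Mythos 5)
Your proposal is correct and its core is the same as the paper's: the multiplicativity $t_j^{\tau\tau'}(\overline{y})=t_j^\tau(\overline{t^{\tau'}}(\overline{y}))$ is proved by exactly the same inductive telescoping cancellation, and injectivity is read off from the first coordinate in the same way. The one (minor, and arguably cleaner) difference is in how well-definedness is handled: the paper exhibits an explicit inverse map $(T_1^\tau,\dots,T_n^\tau)$ and asserts it is easy to check, whereas you first verify $\psi(\id)=\id_{X^n}$ and then deduce bijectivity of each $\psi(\tau)$ from $\psi(\tau^{-1})\circ\psi(\tau)=\psi(\tau)\circ\psi(\tau^{-1})=\id_{X^n}$, which in fact recovers the paper's inverse formula as $\psi(\tau^{-1})$ for free.
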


\begin{proof} Let $\tau\in \sym_X$.
It is easy to check that $\psi(\tau)$ is a bijective map from $X^n$
to $X^n$ with inverse
$$
(y_1,~\dots~,y_n)\mapsto
(T_1^\tau(\overline{y}),~\dots~,T_n^\tau(\overline{y})),
$$
where
$
T_1^\tau(\overline{y})=\tau^{-1}(y_1),
$
$$
T_{j+1}^\tau(\overline{y})=\sigma(T_j^\tau(\overline{y}))^{-1}\cdots\sigma(T_1^\tau(\overline{y}))^{-1}
\tau^{-1}\sigma(y_1)\cdots\sigma(y_{j})(y_{j+1}).
$$
Thus $\psi$ is well-defined.

To prove that $\psi$ is a morphism, we have to check
$\psi(\tau\circ\xi)=\psi(\tau)\circ\psi(\xi)$ for all
$\tau,\xi\in\sym_X$. Component by component, this is equivalent to
verify

$$
t_j^{\tau\circ\xi}(y_1,\dots,y_n)=t_j^{\tau}(t_1^{\xi}(\overline{y}),\dots,t_n^\xi(\overline{y})),
~~\forall j=1,\dots,n.
$$
This is done by induction. The first component is almost immediate;
we write
$\overline{t^\xi}(\overline{y})=(t_1^{\xi}(\overline{y}),\dots,t_n^\xi(\overline{y}))$
for short:
$$
t_1^{\tau\circ\xi}(\overline{y})=\tau(\xi(y_1))=\tau(t_1^\xi(\overline{y}))=t_1^\tau(\overline{t^{\xi}}(\overline{y})).
$$
Now, assume that we have checked it up to the $j$-th component, and
we want to prove it for the $(j+1)$-th component:
\begin{eqnarray*} t_{j+1}^{\tau\circ\xi}(\overline{y})&=&
\sigma(t_j^{\tau\xi}(\overline{y}))^{-1}\cdots\sigma(t_1^{\tau\xi}(\overline{y}))^{-1}
\tau\xi\sigma(y_1)\cdots\sigma(y_{j})(y_{j+1})\\
&=&\sigma(t_j^{\tau}(\overline{t^\xi}(\overline{y})))^{-1}\cdots\sigma(t_1^{\tau}(\overline{t^\xi}(\overline{y})))^{-1}\tau\xi\sigma(y_1)\cdots\sigma(y_{j})(y_{j+1})\\
&=&\sigma(t_j^{\tau}(\overline{t^\xi}(\overline{y})))^{-1}\cdots\sigma(t_1^{\tau}(\overline{t^\xi}(\overline{y})))^{-1}\tau\\
&&\cdot\sigma(t^{\xi}_1(\overline{y}))\cdots\sigma(t_j^\xi(\overline{y}))
\sigma(t^{\xi}_j(\overline{y}))^{-1}\cdots\sigma(t_1^\xi(\overline{y}))^{-1}
\xi\sigma(y_1)\cdots\sigma(y_{j})(y_{j+1})\\
&=&
\sigma(t_j^{\tau}(\overline{t^\xi}(\overline{y})))^{-1}\cdots\sigma(t_1^{\tau}(\overline{t^\xi}(\overline{y})))^{-1}
\tau\\
&&\cdot\sigma(t^{\xi}_1(\overline{y}))\cdots\sigma(t_j^\xi(\overline{y}))
t_{j+1}^\xi(\overline{y}) =t_{j+1}^\tau
(\overline{t^\xi}(\overline{y})). \end{eqnarray*} The second
equality comes from the induction hypothesis, and at the end we use
the definition of $t_{j+1}^\xi(\overline{y})$.

On the other hand, to prove that $\psi$ is injective,  suppose that
$\psi(\tau)=\psi(\xi)$ for some $\tau,\xi\in\sym_X$. Then,
$\psi(\tau)(\overline{y})=\psi(\xi)(\overline{y})$ for all
$\overline{y}\in X^n$. Looking at the first component, we get
$\tau(y_1)=\xi(y_1)$ for all $y_1\in X$, so $\tau=\xi$.
\end{proof}

The next two results give the announced generalization of
\cite[Lemma~5.2]{Cedo1}.

\begin{theorem}\label{teoremaXn}
Let $(X,r)$ be a solution of the YBE, with $r(x,y)=(
\sigma_x(y),\gamma_{y}(x))$. Let $n$ be an integer greater that
$1$. For $\overline{x}\in  X^n$, consider the map
$f_{\overline{x}}\colon X^n\longrightarrow X^n$ defined by
$$
f_{\overline{x}}(\overline{y})=(
h_1(\overline{x},\overline{y}),h_2(\overline{x},\overline{y}),\dots,h_n(\overline{x},\overline{y})),
$$
for $\overline{y}\in X^n$, where the $h_j$ is defined recursively by
$$
h_1(\overline{x},\overline{y})=\sigma_{x_1}\cdots\sigma_{x_n}(y_1),
$$
and
$$
h_j(\overline{x},\overline{y})=\sigma^{-1}_{h_{j-1}(\overline{x},\overline{y})}\cdots\sigma^{-1}_{h_{1}(\overline{x},\overline{y})}
\sigma_{x_1}\cdots\sigma_{x_n}\sigma_{y_1}\cdots\sigma_{y_{j-1}}(y_j),
$$
for $j=2,\dots, n$. Then $f_{\overline{x}}$ is bijective and
$(X^n,r^{(n)})$, where $r^{(n)}:X^n\times X^n\longrightarrow
X^n\times X^n$ is the map defined by
$$
r^{(n)}(\overline{x},\overline{y})=( f_{\overline{x}}(\overline{y}),
f^{-1}_{f_{\overline{x}}(\overline{y})}(\overline{x})),
$$
is a solution of the YBE.
\end{theorem}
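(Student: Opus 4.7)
The plan is to transfer the computation into a left brace via Theorem \ref{thm4.4} and then combine the monomorphism $\psi$ of Lemma \ref{psi} with the algebraic identities of Lemma \ref{proplambda}. By Theorem \ref{thm4.4} I may assume $X\subseteq H$ for some left brace $H$, with $\sigma_x = \lambda_x|_X$ and $\lambda_h(X)\subseteq X$ for every $h\in H$. Taking $\sigma(x):=\sigma_x$ in Lemma \ref{psi}, one sees at a glance that the recursive definition of $h_j(\overline{x},\overline{y})$ is exactly $t_j^\tau(\overline{y})$ for the choice $\tau = \sigma_{x_1}\circ\cdots\circ\sigma_{x_n}\in\sym_X$, so
$$
f_{\overline{x}} \;=\; \psi\bigl(\sigma_{x_1}\circ\cdots\circ\sigma_{x_n}\bigr) \;=\; \psi\bigl(\lambda_{\phi(\overline{x})}|_X\bigr),
$$
where $\phi(\overline{x}) := x_1 x_2\cdots x_n\in H$ denotes the product in the multiplicative group. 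In particular $f_{\overline{x}}\in\sym_{X^n}$, so its bijectivity comes for free.

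The technical heart of the argument is then the identity
$$
h_1 h_2\cdots h_j \;=\; \lambda_{\phi(\overline{x})}(y_1 y_2\cdots y_j) \qquad (1\le j\le n)
$$
inside $H$, which I would prove by induction on $j$. For the inductive step, rewrite $h_j = \lambda^{-1}_{h_1\cdots h_{j-1}}\lambda_{\phi(\overline{x})}\lambda_{y_1\cdots y_{j-1}}(y_j)$ (via Lemma \ref{proplambda}(iii)), multiply by $h_1\cdots h_{j-1}$ on the left, and apply $ab = a + \lambda_a(b)$ (part (iv)) together with the additivity of $\lambda_a$ (part (ii)). Taking $j=n$ yields the ``morphism'' property
$$
\phi\bigl(f_{\overline{x}}(\overline{y})\bigr) \;=\; \lambda_{\phi(\overline{x})}(\phi(\overline{y})).
$$
This is the main obstacle: the somewhat opaque definition of $f_{\overline{x}}$ only becomes transparent once one reads it inside $H$ through $\phi$ and $\lambda$, and the proof of the identity cannot avoid unwinding the recursion.

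It remains to check involutivity, non-degeneracy and the YBE for $r^{(n)}$. Involutivity follows at once from the defining formula $r^{(n)}(\overline{x},\overline{y}) = (f_{\overline{x}}(\overline{y}), f^{-1}_{f_{\overline{x}}(\overline{y})}(\overline{x}))$ by direct substitution, and together with the already-proved bijectivity of each $f_{\overline{x}}$ it yields left and right non-degeneracy. For the YBE, by the criterion recalled in the introduction it is enough to verify
$$
f_{\overline{x}}\circ f_{\overline{u}} \;=\; f_{\overline{y}}\circ f_{\overline{v}},
$$
with $\overline{u} = f^{-1}_{\overline{x}}(\overline{y})$ and $\overline{v} = f^{-1}_{\overline{y}}(\overline{x})$. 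Since $\psi$ is a group homomorphism and $\lambda_a\lambda_b = \lambda_{ab}$, both sides rewrite respectively as $\psi(\lambda_{\phi(\overline{x})\phi(\overline{u})}|_X)$ and $\psi(\lambda_{\phi(\overline{y})\phi(\overline{v})}|_X)$, so injectivity of $\psi$ reduces the problem to showing $\phi(\overline{x})\phi(\overline{u}) = \phi(\overline{y})\phi(\overline{v})$ in $H$. The morphism property gives $\lambda_{\phi(\overline{x})}(\phi(\overline{u})) = \phi(\overline{y})$, so Lemma \ref{proplambda}(iv) yields $\phi(\overline{x})\phi(\overline{u}) = \phi(\overline{x}) + \phi(\overline{y})$; symmetrically $\phi(\overline{y})\phi(\overline{v}) = \phi(\overline{y}) + \phi(\overline{x})$, and commutativity of the additive group of $H$ closes the argument.
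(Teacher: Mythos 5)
Your setup---embedding $X$ into the brace $H$ via Theorem~\ref{thm4.4}, identifying $f_{\overline{x}}$ with $\psi(\sigma_{x_1}\cdots\sigma_{x_n})$, and the key identity $h_1(\overline{x},\overline{y})\cdots h_j(\overline{x},\overline{y})=\lambda_{x_1\cdots x_n}(y_1\cdots y_j)$---is exactly the paper's route, and your verification of the braid relation via $\phi(\overline{x})\phi(\overline{u})=\phi(\overline{x})+\phi(\overline{y})$ is a correct, slightly repackaged form of the paper's appeal to Lemma~\ref{proplambda}(vi). The genuine gap is in non-degeneracy. Left non-degeneracy is indeed the bijectivity of $f_{\overline{x}}$, but right non-degeneracy means bijectivity of $\gamma_{\overline{y}}(\overline{x})=f^{-1}_{f_{\overline{x}}(\overline{y})}(\overline{x})$ \emph{as a function of} $\overline{x}$, and this does not follow ``at once'' from involutivity together with the bijectivity of each $f_{\overline{x}}$: the subscript of $f^{-1}$ itself depends on $\overline{x}$, so $\gamma_{\overline{y}}$ is not a composite of bijections. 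For finite $X$ one could invoke Rump's theorem that finite involutive left non-degenerate solutions of the braid relation are automatically right non-degenerate, but the theorem here is stated for arbitrary $X$, that automatic implication fails for infinite cycle sets, and in any case you give no argument.

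The paper spends roughly half of its proof on precisely this point. Injectivity of $\gamma_{\overline{y}}$ reduces, through the identity above, to injectivity of the map $a\mapsto\lambda^{-1}_{\lambda_a(c)}(a)$ on $H$, which holds because the solution associated to the brace $H$ is itself non-degenerate (Lemma~\ref{proplambda}(vii)). Surjectivity is more delicate: given a target $\overline{z}$, one first solves $\lambda^{-1}_{\lambda_{a_n}(y_1\cdots y_n)}(a_n)=z_1\cdots z_n$ for $a_n\in H$, then forms the partial elements $a_j=\lambda_{\lambda_{a_n}(y_1\cdots y_n)}(z_1\cdots z_j)$ and must check that the candidates $x_1=a_1$, $x_i=a_{i-1}^{-1}a_i$ lie in $X$ and not merely in $H$; this step uses the invariance $\lambda_h(Y)\subseteq Y$ for all $h\in H$ guaranteed by Theorem~\ref{thm4.4}. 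None of this appears in your proposal, so as written the proof is incomplete at the right non-degeneracy of $r^{(n)}$.
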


\begin{proof}
Let $\sigma\colon X\longrightarrow \sym_X$ be the map defined
by $\sigma(x)=\sigma_x$, for all $x\in X$. Consider the map
$\psi\colon \sym_X\longrightarrow \sym_{X^n}$ defined as in
Lemma~\ref{psi}. It is clear that
$f_{\overline{x}}=\psi(\sigma_{x_1}\cdots\sigma_{x_n})$. Hence
$f_{\overline{x}}$ is bijective.

By Theorem~\ref{thm4.4}, we may assume that $X$ is a subset of a
left brace $H$ and that $\sigma_x$ is the restriction of $\lambda_x$
to $X$, for all $x\in X$. Recall that $\lambda_a(b)=ab-a$, for all
$a,b\in H$. Therefore, by Lemma~\ref{proplambda}(iii),
$$
h_1(\overline{x},\overline{y})=\sigma_{x_1}\cdots\sigma_{x_n}(y_1)=\lambda_{x_1\cdots
x_n}(y_1).
$$
We claim that
\begin{equation}\label{3.1}
\lambda_{x_1\cdots x_n} (y_1\cdots
y_j)=h_1(\overline{x},\overline{y})\cdots
h_{j}(\overline{x},\overline{y}),
\end{equation}
for all $1\leq j\leq n$ and $x_1,\dots x_n,y_1,\dots ,y_n\in X$.

We prove this claim by induction on $j$. We know that it is true for
$j=1$. Suppose that $j>1$ and the claim is true for $j-1$. We have
\begin{eqnarray*}
\lambda_{x_1\cdots x_n} (y_1\cdots y_j)&=&\lambda_{x_1\cdots x_n}
(y_1\cdots y_{j-1}+\lambda_{y_1\cdots y_{j-1}}(y_j))\\
&=&\lambda_{x_1\cdots x_n}
(y_1\cdots y_{j-1})+\lambda_{x_1\cdots x_n}\lambda_{y_1\cdots y_{j-1}}(y_j)\\
&&\quad \mbox{(by Lemma~\ref{proplambda}(ii))}\\
&=&h_1(\overline{x},\overline{y})\cdots
h_{j-1}(\overline{x},\overline{y})+\lambda_{x_1\cdots x_n}\lambda_{y_1\cdots y_{j-1}}(y_j)\\
&&\quad \mbox{(by induction hypothesis)}\\
&=&h_1(\overline{x},\overline{y})\cdots
h_{j-1}(\overline{x},\overline{y})\lambda^{-1}_{h_1(\overline{x},\overline{y})\cdots
h_{j-1}(\overline{x},\overline{y})}\lambda_{x_1\cdots x_n}\lambda_{y_1\cdots y_{j-1}}(y_j)\\
&&\quad \mbox{(by Lemma~\ref{proplambda}(iv))}\\
\end{eqnarray*}
By Lemma~\ref{proplambda}(iii),
$$\lambda^{-1}_{h_1(\overline{x},\overline{y})\cdots
h_{j-1}(\overline{x},\overline{y})}\lambda_{x_1\cdots
x_n}\lambda_{y_1\cdots
y_{j-1}}(y_j)=h_j(\overline{x},\overline{y}).$$ Hence the claim
follows. By (\ref{3.1}), we have that
\begin{eqnarray}\label{hj}
&&h_{j}(\overline{x},\overline{y})=\lambda_{x_1\cdots x_n}(y_1\cdots
y_{j-1})^{-1}\lambda_{x_1\cdots x_n}(y_1\cdots y_{j}),
\end{eqnarray}
for $2\leq j\leq n$. Let
$H_1(\overline{x},\overline{y})=\lambda^{-1}_{x_1\cdots x_n}(y_1)$
and
$$H_j(\overline{x},\overline{y})=\lambda^{-1}_{x_1\cdots x_n}(y_1\cdots
y_{j-1})^{-1}\lambda^{-1}_{x_1\cdots x_n}(y_1\cdots y_{j}),$$ for
$2\leq j\leq n$. It is easy to check that the map
$X^n\longrightarrow X^n$ defined by $\overline{y}\mapsto
(H_1(\overline{x},\overline{y}),\dots
,H_n(\overline{x},\overline{y}))$ is $f^{-1}_{\overline{x}}$.

By the definition of $r^{(n)}$, it is straightforward to check that
$r^{(n)}\circ r^{(n)}=\id$. Thus, in order to prove that
$(X^n,r^{(n)})$  is an involutive non-degenerate set-theoretic
solution of the Yang-Baxter equation, we should show that
\begin{itemize}
\item[(a)]
$
f_{\overline{x}}f_{f^{-1}_{\overline{x}}(\overline{y})}(\overline{z})=
f_{\overline{y}}f_{f^{-1}_{\overline{y}}(\overline{x})}(\overline{z}),\text{
for all }\overline{x},\overline{y},\overline{z}\in X^n,$ and
\item[(b)] the map $\gamma_{\overline{y}}\colon X^n\longrightarrow X^n$ defined by
$\gamma_{\overline{y}}(\overline{x})=
f^{-1}_{f_{\overline{x}}(\overline{y})}(\overline{x})$ is bijective.
\end{itemize}
By (\ref{3.1}) and the definition of
$H_j(\overline{x},\overline{y})$, the first component of
$f_{\overline{x}}f_{f^{-1}_{\overline{x}}(\overline{y})}(\overline{z})=f_{\overline{x}}f_{(H_1(\overline{x},\overline{y}),\dots,H_n(\overline{x},\overline{y}))}(\overline{z})$
is \begin{eqnarray*} \lambda_{x_1\cdots
x_n}\lambda_{H_1(\overline{x},\overline{y})\cdots
H_n(\overline{x},\overline{y})}(z_1)&=& \lambda_{x_1\cdots
x_n}\lambda_{\lambda^{-1}_{x_1\cdots x_n}(y_1\cdots y_n)}(z_1)\\
&=& \lambda_{y_1\cdots y_n}\lambda_{\lambda^{-1}_{y_1\cdots
y_n}(x_1\cdots x_n)}(z_1),
\end{eqnarray*}
where the last equality follows from
Lemma~\ref{proplambda}(vi). For $j>1$, the $j$-th component of
$f_{\overline{x}}f_{f^{-1}_{\overline{x}}(\overline{y})}(\overline{z})=f_{\overline{x}}f_{(H_1(\overline{x},\overline{y}),\dots,H_n(\overline{x},\overline{y}))}(\overline{z})$
is
$$
( \lambda_{x_1\cdots
x_n}\lambda_{H_1(\overline{x},\overline{y})\cdots
H_n(\overline{x},\overline{y})}(z_1\cdots z_{j-1}))^{-1} \cdot (
\lambda_{x_1\cdots x_n}\lambda_{H_1(\overline{x},\overline{y})\cdots
H_n(\overline{x},\overline{y})}(z_1\cdots z_{j}))
$$
$$
=(\lambda_{x_1\cdots x_n}\lambda_{\lambda^{-1}_{x_1\cdots
x_n}(y_1\cdots y_n)}(z_1\cdots z_{j-1}))^{-1} \cdot
(\lambda_{x_1\cdots x_n}\lambda_{\lambda^{-1}_{x_1\cdots
x_n}(y_1\cdots y_n)}(z_1\cdots z_{j}))
$$
and, by Lemma~\ref{proplambda}(vi), we can interchange the $x$'s and
the $y$'s, and (a) follows.

To prove (b), first we shall see that $\gamma_{\overline{y}}$ is
injective. Let $\overline{x},\overline{z}\in X^n$ be elements such
that
$\gamma_{\overline{y}}(\overline{x})=\gamma_{\overline{y}}(\overline{z})$.
Hence $H_j((h_1(\overline{x},\overline{y}),\dots
,h_n(\overline{x},\overline{y})),\overline{x})=H_j((h_1(\overline{z},\overline{y}),\dots
,h_n(\overline{z},\overline{y})),\overline{z})$, for all $j=1,\dots,
n$. That is $$\lambda^{-1}_{h_1(\overline{x},\overline{y})\cdots
h_n(\overline{x},\overline{y})}(x_1)=\lambda^{-1}_{h_1(\overline{z},\overline{y})\cdots
h_n(\overline{z},\overline{y})}(z_1)$$ and
$$\lambda^{-1}_{h_1(\overline{x},\overline{y})\cdots
h_n(\overline{x},\overline{y})}(x_1\cdots
x_{j-1})^{-1}\lambda^{-1}_{h_1(\overline{x},\overline{y})\cdots
h_n(\overline{x},\overline{y})}(x_1\cdots x_j)$$
$$=\lambda^{-1}_{h_1(\overline{z},\overline{y})\cdots
h_n(\overline{z},\overline{y})}(z_1\cdots
z_{j-1})^{-1}\lambda^{-1}_{h_1(\overline{z},\overline{y})\cdots
h_n(\overline{z},\overline{y})}(z_1\cdots z_j),$$ for all $j=2,\dots
,n$. Therefore
$$\lambda^{-1}_{h_1(\overline{x},\overline{y})\cdots
h_n(\overline{x},\overline{y})}(x_1\cdots
x_j)=\lambda^{-1}_{h_1(\overline{z},\overline{y})\cdots
h_n(\overline{z},\overline{y})}(z_1\cdots z_j),$$ for all $j=1,\dots
,n$. By (\ref{3.1}),
\begin{eqnarray}\label{inj}&&\lambda^{-1}_{\lambda_{x_1\cdots x_n}(y_1\cdots
y_n)}(x_1\cdots x_j)=\lambda^{-1}_{\lambda_{z_1\cdots z_n}(y_1\cdots
y_n)}(z_1\cdots z_j),\end{eqnarray}
for all $j=1,\dots ,n$. By
Lemma~\ref{proplambda}(vii), since
$$\lambda^{-1}_{\lambda_{x_1\cdots x_n}(y_1\cdots y_n)}(x_1\cdots
x_n)=\lambda^{-1}_{\lambda_{z_1\cdots z_n}(y_1\cdots y_n)}(z_1\cdots
z_n),$$ we have that $x_1\cdots x_n=z_1\cdots z_n$. Thus, by
(\ref{inj}), $x_1\cdots x_j=z_1\cdots z_j$, for all $j=1,\dots ,n$.
Hence $x_j=z_j$, for all $j=1,\dots ,n$. Therefore
$\gamma_{\overline{y}}$ is injective.

We shall see that $\gamma_{\overline{y}}$ is surjective. Let
$\overline{z}=(z_1,\dots ,z_n)\in X^n$. By
Lemma~\ref{proplambda}(vii), there exists $a_n\in H$ such that
$\lambda^{-1}_{\lambda_{a_n}(y_1\cdots y_n)}(a_n)=z_1\cdots z_n$.
Let $$a_j=\lambda_{\lambda_{a_n}(y_1\cdots y_n)}(z_1\dots z_j),$$
for all $j=1,\dots ,n-1$. By Theorem~\ref{thm4.4}, $a_1\in X$. Let
$x_1=a_1$ and $x_i=a_{i-1}^{-1}a_i$, for $1<i\leq n$. We shall prove
that $x_i\in X$, for all $i$, and
$\gamma_{\overline{y}}(\overline{x})=\overline{z}$. Suppose that
$i>1$ and $x_1,\dots ,x_{i-1}\in X$. We have that
\begin{eqnarray*}a_i&=&\lambda_{\lambda_{a_n}(y_1\cdots y_n)}(z_1\dots z_i)=\lambda_{\lambda_{a_n}(y_1\cdots y_n)}(z_1\dots z_{i-1}+\lambda_{z_1\dots
z_{i-1}}(z_i))\\
&=&\lambda_{\lambda_{a_n}(y_1\cdots y_n)}(z_1\dots
z_{i-1})+\lambda_{\lambda_{a_n}(y_1\cdots y_n)}(\lambda_{z_1\dots
z_{i-1}}(z_i))\quad (\mbox{by Lemma~\ref{proplambda}(ii)}) \\
&=&a_{i-1}+\lambda_{\lambda_{a_n}(y_1\cdots y_n)z_1\dots
z_{i-1}}(z_i)\quad (\mbox{by Lemma~\ref{proplambda}})(iii) \\
&=&a_{i-1}\lambda^{-1}_{a_{i-1}}(\lambda_{\lambda_{a_n}(y_1\cdots
y_n)z_1\dots z_{i-1}}(z_i))\quad (\mbox{by
Lemma~\ref{proplambda}})(iv).
\end{eqnarray*}
Hence
$x_i=a_{i-1}^{-1}a_i=\lambda^{-1}_{a_{i-1}}(\lambda_{\lambda_{a_n}(y_1\cdots
y_n)z_1\dots z_{i-1}}(z_i))\in X$, by Theorem~\ref{thm4.4}. Thus, by
induction, $x_1,\dots ,x_n\in X$. Now we have
$$\gamma_{\overline{y}}(\overline{x})=(H_1((h_1(\overline{x},\overline{y}),\dots, h_n(\overline{x},\overline{y})),\overline{x}),\dots,
H_n((h_1(\overline{x},\overline{y}),\dots,
h_n(\overline{x},\overline{y})),\overline{x})).$$ By (\ref{3.1}) and
the definition of $H_j$,
\begin{eqnarray*}
\gamma_{\overline{y}}(\overline{x})&=&(\lambda^{-1}_{\lambda_{x_1\cdots
x_n}(y_1\cdots y_n)}(x_1), \lambda^{-1}_{\lambda_{x_1\cdots
x_n}(y_1\cdots y_n)}(x_1)^{-1} \lambda^{-1}_{\lambda_{x_1\cdots
x_n}(y_1\cdots y_n)}(x_1x_2),\dots,\\
&& \lambda^{-1}_{\lambda_{x_1\cdots x_n}(y_1\cdots y_n)}(x_1\cdots
x_{n-1})^{-1} \lambda^{-1}_{\lambda_{x_1\cdots x_n}(y_1\cdots
y_n)}(x_1\cdots x_n)
)\\
&=& (\lambda^{-1}_{\lambda_{a_n}(y_1\cdots y_n)}(a_1),
\lambda^{-1}_{\lambda_{a_n}(y_1\cdots y_n)}(a_1)^{-1}
\lambda^{-1}_{\lambda_{a_n}(y_1\cdots y_n)}(a_2),\dots,\\
&&\lambda^{-1}_{\lambda_{a_n}(y_1\cdots y_n)}(a_{n-1})^{-1}
\lambda^{-1}_{\lambda_{a_n}(y_1\cdots y_n)}(a_n) )=(z_1,z_2,\dots
,z_n). \end{eqnarray*} Therefore $\gamma_{\overline{y}}$ is
bijective and (b) follows. This finishes the proof.
\end{proof}

The next proposition describes the permutation group of the
solutions $(X^n,r^{(n)})$ as a certain subgroup of the
permutation group of $(X,r)$. After that, we give two cases in
which $\mathcal{G}(X^n,r^{(n)})\cong \mathcal{G}(X,r)$.

\begin{proposition}
With the above notation, $\mathcal{G}(X^n,r^{(n)})$ is
isomorphic to the subgroup of $\mathcal{G}(X,r)$ generated by all
the permutations $\sigma_{x_1}\cdots\sigma_{x_n}$, for all $x_i\in
X$. In particular,
\begin{enumerate}
\item If $\sigma_z=\id$ for some $z\in X$,
then $\mathcal{G}(X^n,r^{(n)})\cong \mathcal{G}(X,r)$ for all $n$.
\item If $X$ is a finite set and $\ord{\mathcal{G}(X,r)}=m<+\infty$, then,
 for all $n$ such that
$\gcd(m,n)=1$, we have $\mathcal{G}(X^n,r^{(n)})\cong \mathcal{G}(X,r)$.
\end{enumerate}
\end{proposition}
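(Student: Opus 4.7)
The plan is to read off $\mathcal{G}(X^n,r^{(n)})$ directly from the identity $f_{\overline{x}}=\psi(\sigma_{x_1}\cdots\sigma_{x_n})$ that already appeared in the proof of Theorem~\ref{teoremaXn}, and then dispose of the two special cases by short, purely group-theoretic arguments.

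For the first statement, I would argue as follows. By the definition of $r^{(n)}$, the permutation group $\mathcal{G}(X^n,r^{(n)})$ is the subgroup of $\sym_{X^n}$ generated by the maps $f_{\overline{x}}$ for $\overline{x}\in X^n$. The proof of Theorem~\ref{teoremaXn} establishes that $f_{\overline{x}}=\psi(\sigma_{x_1}\cdots\sigma_{x_n})$, where $\psi\colon\sym_X\longrightarrow\sym_{X^n}$ is the monomorphism supplied by Lemma~\ref{psi}. Hence $\mathcal{G}(X^n,r^{(n)})=\psi(H)$, with $H$ the subgroup of $\mathcal{G}(X,r)$ generated by $\{\sigma_{x_1}\cdots\sigma_{x_n}\mid x_i\in X\}$. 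Because $\psi$ is injective, $\mathcal{G}(X^n,r^{(n)})\cong H$, which is exactly the claim.

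For item~(1), if $\sigma_z=\id$ for some $z\in X$, then for any $x\in X$ the product $\sigma_x\sigma_z\cdots\sigma_z$ of length $n$ equals $\sigma_x$ and lies in $H$. Since $\{\sigma_x\mid x\in X\}$ generates $\mathcal{G}(X,r)$, this forces $H=\mathcal{G}(X,r)$.

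For item~(2), set $m=\ord{\mathcal{G}(X,r)}$ with $\gcd(m,n)=1$. Taking all arguments equal to $x$ in the generating set of $H$ shows $\sigma_x^n\in H$ for every $x\in X$. The order $m_x$ of $\sigma_x$ divides $m$, so $\gcd(m_x,n)=1$, and consequently $\sigma_x^n$ generates the same cyclic subgroup as $\sigma_x$; hence $\sigma_x\in H$ for all $x\in X$ and $H=\mathcal{G}(X,r)$. There is no serious obstacle in this proposition: once the key identification $f_{\overline{x}}=\psi(\sigma_{x_1}\cdots\sigma_{x_n})$ from the previous theorem is invoked, everything reduces to the elementary observation that products of length exactly $n$ already capture every generator, either after padding with an identity (case~1) or after taking a power coprime to the order (case~2).
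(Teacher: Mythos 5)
Your proposal is correct and follows essentially the same route as the paper: both identify $\mathcal{G}(X^n,r^{(n)})$ with $\psi\bigl(\langle \sigma_{x_1}\cdots\sigma_{x_n} : x_i\in X\rangle\bigr)$ via the monomorphism $\psi$ of Lemma~\ref{psi}, handle case~(1) by padding with $\sigma_z=\id$, and handle case~(2) by noting that $\sigma_x^n\in H$ together with $\gcd(m,n)=1$ recovers $\sigma_x$ (the paper phrases this as $\sigma_x=\sigma_x^{nk}$ with $nk\equiv 1\pmod m$, which is the same observation). No gaps.
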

\begin{proof}
Recall that $r(x,y)=(\sigma_x(y),\gamma_y(x))$. Let $\sigma\colon
X\longrightarrow \sym_X$ be the map defined by $\sigma(x)=\sigma_x$,
for all $x\in X$. Consider the map $\psi\colon \sym_X\longrightarrow
\sym_{X^n}$ defined as in Lemma~\ref{psi}. We have that

\begin{eqnarray*} \mathcal{G}(X^n,r^{(n)})&=&\left\langle f_{\overline{x}} :
\overline{x}\in X^n\right\rangle= \left\langle
\psi(\sigma_{x_1}\cdots\sigma_{x_n}) : x_i\in X\right\rangle\\
&=&\psi(\left\langle \sigma_{x_1}\cdots\sigma_{x_n} : x_i\in
X\right\rangle)\cong \left\langle \sigma_{x_1}\cdots\sigma_{x_n} :
x_i\in X\right\rangle, \end{eqnarray*} using in the last isomorphism
that $\psi$ is  a monomorphism.

In particular,
\begin{enumerate}
\item If $\sigma_z=\id$ for some $z\in X$, then any $\sigma_x$ can be written as a product of
$n$ permutations using $\sigma_x=\sigma_z^{n-1}\sigma_x$, so
$\left\langle \sigma_{x_1}\cdots\sigma_{x_n} : x_i\in
X\right\rangle= \left\langle \sigma_x : x\in X\right\rangle$, and

$$
\mathcal{G}(X^n,r^{(n)})\cong
\left\langle \sigma_{x_1}\cdots\sigma_{x_n} : x_i\in X\right\rangle=
\left\langle \sigma_x : x\in X\right\rangle=\mathcal{G}(X,r).
$$

\item Suppose that $X$ is finite, and that $n$ is a positive integer coprime with $m=\ord{\mathcal{G}(X,r)}$.
We can find a positive integer $k$ such that $nk\equiv 1\pmod{m}$.
Then, $\sigma_x=\sigma_x^{nk}$ for all $x\in X$, which implies $\left\langle
\sigma_{x_1}\cdots\sigma_{x_n} : x_i\in X\right\rangle= \left\langle
\sigma_x : x\in X\right\rangle$, and the conclusion follows as in
the previous case.
\end{enumerate}

\end{proof}

\section*{Acknowledgments}
 Research partially supported by DGI MINECO MTM2011-28992-C02-01, by FEDER
UNAB10-4E-378 "Una manera de hacer Europa", and by the Comissionat
per Universitats i Recerca de la Generalitat de Catalunya.

\vspace{30pt}
 \noindent \begin{tabular}{llllllll}
 D. Bachiller && F. Ced\'{o}  \\
 Departament de Matem\`atiques &&  Departament de Matem\`atiques \\
 Universitat Aut\`onoma de Barcelona &&  Universitat Aut\`onoma de Barcelona  \\
08193 Bellaterra (Barcelona), Spain    && 08193 Bellaterra
(Barcelona), Spain \\
dbachiller@mat.uab.cat && cedo@mat.uab.cat
\end{tabular}


\begin{thebibliography}{99}
\bibitem{Baxter} R. J. Baxter, Eight-vertex model in lattice statistics,
Phys. Rev. Lett. 26 (1971),  832--833.
\bibitem{NirBenDavid} N. Ben David, On groups of central type and
involutive Yang-Baxter groups: a cohomological approach, Ph.D.
thesis, The Technion-Israel Institute of Technology, Haifa, 2012.
\bibitem{Cedo1} F. Ced\'o, E. Jespers and \'{A}. del R\'{\i}o, Involutive
Yang-Baxter groups, Trans. Amer. Math. Soc. 362 (2010), 2541--2558.
\bibitem{Cedo2} F. Ced\'o, E. Jespers and J. Okni\'{n}ski, Braces and the Yang-Baxter equation, ArXiv:
1205.3587v1 [math.RA] 16 May 2012.
\bibitem{Drinfeld}  V. G. Drinfeld, On unsolved problems in quantum group theory.
Quantum Groups, Lecture Notes Math. 1510, Springer-Verlag, Berlin,
1992, 1--8.
\bibitem{Etingof} P. Etingof, T. Schedler and A. Soloviev,
  Set-theoretical solutions to the quantum
  Yang-Baxter equation, Duke Math. J. 100 (1999), 169--209.
\bibitem{Gateva} T. Gateva-Ivanova, A combinatorial approach to the set-theoretic
solutions of the Yang-Baxter equation, J. Math. Phys. 45 (2004), 3828--3858.
\bibitem{GatevaCameron} T. Gateva-Ivanova and P. Cameron, Multipermutation solutions of the Yang-Baxter
equation, Comm. Math. Phys. 309 (2012), 583--621.
\bibitem{GatevaMajid} T. Gateva-Ivanova and S. Majid, Matched pairs approach to set theoretic
solutions of the Yang-Baxter equation, J. Algebra 319 (2008), no. 4, 1462--1529.
\bibitem{GatevaVandenBergh} T. Gateva-Ivanova and M. Van den Bergh,
  Semigroups of $I$-type, J. Algebra 206 (1998), 97--112.
\bibitem{Jespers} E. Jespers and J. Okni\'{n}ski, Monoids and groups of
  $I$-type, Algebr. Represent. Theory 8 (2005), 709--729.
\bibitem{JO} E. Jespers and J. Okni\'{n}ski,  Noetherian Semigroup Rings,
Springer, Dordrecht 2007.
\bibitem{Lu} Jiang-Hua Lu, Min Yan and Yong-Chang Zhu, On the set-theoretical
Yang-Baxter equation, Duke Math. J. 104 (2000), 153--170.
\bibitem{Rump} W. Rump, Braces, radical rings, and the quantum Yang-Baxter equation,
J. Algebra 307 (2007), 153--170.
\bibitem{Yang} C.N. Yang, Some exact results for the many-body
problem in one dimension with repulsive delta-function interaction,
Phys. Rev. Lett. 19 (1967), 1312--1315.

\end{thebibliography}
\end{document}